\pdfoutput=1
\documentclass{louloupart1}
\usepackage{mathtools}
\usepackage{graphicx}
\usepackage{hyperref}
\usepackage{bm}
\usepackage{tikz}
\usetikzlibrary{shapes.geometric,calc}
\usetikzlibrary{decorations.markings,arrows,arrows.meta,positioning,cd}
\tikzset{->-/.style={decoration={
  markings,
  mark=at position #1 with {\arrow{Computer Modern Rightarrow[length=5pt,width=5pt]}}},postaction={decorate}}}
\tikzset{->-rev/.style={decoration={
  markings,
  mark=at position #1 with {\arrow{Computer Modern Rightarrow[length=5pt,width=5pt,reversed]}}},postaction={decorate}}}

\title[Property $R_\infty$ for Artin and Coxeter groups whose outer automorphisms are length-preserving]
{Property $R_\infty$ for Artin and Coxeter groups whose outer\\ automorphisms are length-preserving}
\author[I Soroko]{Ignat Soroko}
\givenname{Ignat}
\surname{Soroko}
\address{Ignat Soroko, Department of Mathematics, Southern Methodist University, Dallas, TX 75205, USA}
\email{isoroko@smu.edu}
\date{\today}

\newtheorem{thm}{Theorem} 
\newtheorem{lem}[thm]{Lemma}

\newtheorem{corl}[thm]{Corollary}

\newtheorem{quest}{Question}

\theoremstyle{definition}

\newtheorem{rem}[thm]{Remark}

\newtheorem*{expl}{Example}
\newtheorem*{acknow}{Acknowledgments}

\numberwithin{equation}{section}

\begin{document}

\def\N{\mathbb N} \def\Inn{{\rm Inn}} \def\Out{{\rm Out}} \def\Z{\mathbb Z}
\def\id{{\rm id}} \def\supp{{\rm supp}} 
\renewcommand{\Im}{\operatorname{Im}} 
\def\PP{\mathcal P} \def\Homeo{{\rm Homeo}}
\def\SHomeo{{\rm SHomeo}} \def\LHomeo{{\rm LHomeo}}
\def\MM{\mathcal M} \def\CC{\mathcal C} \def\AA{\mathcal A}
\def\S{\mathbb S} \def\FF{\mathcal F} \def\SS{\mathcal S}
\def\LL{\mathcal L} \def\D{\mathbb D} 

\newcommand{\Ker}{\operatorname{Ker}}
\newcommand{\Cyl}{\operatorname{Cyl}}
\newcommand{\Aut}{\operatorname{Aut}}
\newcommand{\conj}{\operatorname{conj}}
\newcommand{\card}{\operatorname{Card}}
\newcommand{\Sym}{\operatorname{Sym}}
\newcommand{\Cent}{\operatorname{Cent}}
\newcommand{\Fix}{\operatorname{Fix}}
\newcommand{\Isom}{\operatorname{Isom}}
\newcommand{\ov}{\overline}
\mathchardef\mhyphen="2D
\renewcommand{\le}{\leqslant}
\renewcommand{\ge}{\geqslant}
\newcommand{\lk}{\operatorname{lk}}

\begin{abstract}
We prove that every length-preserving automorphism of a nonspherical Artin or Coxeter group has infinite Reidemeister number. As a consequence, we establish property $R_\infty$ for large-type free-of-infinity Artin groups, infinite strongly rigid Coxeter groups, and infinite virtually abelian Coxeter groups, including all affine Coxeter groups. We also prove that the automorphism group of every infinite irreducible non-affine strongly rigid Coxeter group is acylindrically hyperbolic.

\smallskip\noindent
{\bf 2020 Mathematics Subject Classification~} 
Primary: 20F36, 20F55; Secondary: 20E36, 20E45.

\smallskip\noindent
{\bf Keywords\ \ } 
Artin groups, Coxeter groups, property $R_\infty$, twisted conjugacy, acylindrical hyperbolicity.

\end{abstract}

\maketitle


\section{Introduction}\label{sec1}

Let $G$ be a group and let $\varphi$ be an automorphism of $G$. We say that elements $g,h\in G$ are \emph{$\varphi$-twisted conjugate} if there exists $x\in G$ such that $h=x\,g\, \varphi(x)^{-1}$. This defines an equivalence relation on $G$, and the number of its equivalence classes (possibly, infinite) is called the \emph{Reidemeister number} of $\varphi$, denoted by $R(\varphi)$. We say that~$G$ \emph{has property $R_{\infty}$} if $R(\varphi)=\infty$ for all $\varphi\in \Aut(G)$. 

Property $R_\infty$ is an interesting and rather mysterious property of groups, which has its origins in the Nielsen fixed point theory, and the problem of determining which classes of groups possess it remains an area of active research. The list of groups known to have property $R_\infty$ is quite large and contains non-elementary hyperbolic groups, broad classes of relatively hyperbolic groups and all finitely generated groups with infinitely many ends~\cite{Felsh1,FFW23,FFIMSW26}. Other prominent examples include mapping class groups, higher rank lattices and certain solvable, nilpotent, and branch groups~\cite{FelTro2,FelNas1}. In particular, the free nilpotent group $N_{r,c}$ of rank $r\ge2$ and nilpotency class $c$ has property $R_\infty$ if and only if $c\ge 2r$~\cite{DekGon1}. It was conjectured that every finitely generated residually finite nonamenable group has property $R_\infty$~\cite{FelTro2,SorVas1}. For some other recent developments, see~\cite{DekLat1,Troit1,DeLiRe1,SorVas2}.

Let $S$ be a finite set. A \emph{Coxeter matrix} over $S$ is a symmetric matrix $(m_{st})_{s,t\in S}$ with entries in $\{1,2,\dots\}\cup\{\infty\}$, such that $m_{ss}=1$ for all $s\in S$ and $m_{st}=m_{ts}\ge2$ if $s\ne t$. 
A Coxeter matrix can be encoded by the corresponding \emph{Coxeter graph} $\Gamma$ with vertex set $S$: two distinct vertices $s,t\in S$ are joined by an edge precisely when $m_{st}\ne2$, and the edge is labeled by $m_{st}$, with the label $3$ usually omitted (thus an edge may carry the label $\infty$). The Coxeter system is \emph{irreducible} if $\Gamma$ is connected. 
The \emph{Artin group of type $\Gamma$} is the group $A[\Gamma]$ given by the presentation
\[
A[\Gamma]=\langle S\mid \Pi(s,t,m_{st})=\Pi(t,s,m_{ts}), \text{ for all } s\ne t,\,m_{st}\ne\infty\rangle,
\]
where $\Pi(s,t,m_{st})$ is the word $stst\dots$ of length $m_{st}\ge2$. The \emph{Coxeter group $W[\Gamma]$ of type $\Gamma$} is the quotient of $A[\Gamma]$ by all relations of the form $s^2=1$, $s\in S$. We denote by $P[\Gamma]$ the kernel of the natural epimorphism $A[\Gamma]\to W[\Gamma]$; it is called the \emph{pure Artin group of type $\Gamma$}. An Artin group $A[\Gamma]$ and its Coxeter group $W[\Gamma]$ are called \emph{spherical} if $W[\Gamma]$ is finite, and \emph{nonspherical} if $W[\Gamma]$ is infinite. They are called \emph{right-angled} if $m_{st}\in\{2,\infty\}$ for all $s\ne t$. An Artin group $A[\Gamma]$ is called \emph{large} (or \emph{large-type}) if all $m_{st}\ge3$ for distinct $s,t\in S$ (with $m_{st}=\infty$ allowed). It is called \emph{hyperbolic-type} if its Coxeter group $W[\Gamma]$ is word-hyperbolic. (Notice that Artin groups of hyperbolic-type are not word-hyperbolic themselves, unless they are free.) An Artin group $A[\Gamma]$ is called \emph{free-of-infinity} if $m_{st}\ne\infty$ for all distinct $s,t\in S$. Coxeter groups of free-of-infinity Artin groups are also called \emph{$2$-spherical}. A Coxeter group $W$ is called \emph{strongly rigid} if any two Coxeter generating sets $S,S'$ for $W$ are conjugate.

Let $G$ denote an Artin or Coxeter group over $S$. We call an automorphism $\varphi$ of $G$ \emph{length-preserving} if $\varphi(s)\in S\cup S^{-1}$ for all $s\in S$. We call the automorphism $\iota$ of an Artin group defined on the generators $s\in S$ by $\iota(s)=s^{-1}$ the \emph{global inversion of generators}. Notice that $\iota$ is never inner, since inner automorphisms act as the identity on the abelianization, whereas $\iota$ acts as $-\id$.

For Artin groups, property $R_\infty$ was established for the following types:
\begin{itemize}
\item braid groups (i.e.\ Artin groups of type~$A_n$) in~\cite{FeGoDa1};
\item pure braid groups $P[A_n]$ in~\cite{DeGoOc1};
\item spherical groups of types $A_n$, $B_n$, $D_4$, $I_2(m)$ for $m\ge3$, and their pure subgroups in~\cite{CalSor1};
\item spherical groups of type $D_n$, $n\ge5$, in~\cite{SorVas1};
\item affine groups of types $\widetilde A_n$, $\widetilde C_n$ in~\cite{CalSor1};
\item nonabelian right-angled Artin groups in~\cite{Witdo1} (see also~\cite{DekSen1});
\item large hyperbolic-type free-of-infinity Artin groups (and some other classes of large hyperbolic-type Artin groups) in~\cite{SorVas1}.
\end{itemize}

For Coxeter groups, property $R_\infty$ was established for the following types:
\begin{itemize}
\item non-elementary Gromov-hyperbolic Coxeter groups, by the general theorem for hyperbolic groups~\cite{Felsh1}. A criterion for hyperbolicity of Coxeter groups is given in~\cite[Corollary~12.6.3]{Davis1};

\item suitable relatively hyperbolic Coxeter groups, by the general theorem for relatively hyperbolic groups~\cite[Theorem~6.1.2]{FFIMSW26}. Criteria for relative hyperbolicity of Coxeter groups are given in~\cite{Capra1,Capra2};

\item Coxeter groups with infinitely many ends, by the general theorem for groups with infinitely many ends~\cite[Corollary~8.1.6(3)]{FFIMSW26}. A criterion for determining when a Coxeter group has infinitely many ends is given in~\cite[Theorem~8.7.4]{Davis1};

\item non-virtually-abelian right-angled Coxeter groups. Indeed, these are graph products of copies of $\mathbb Z/2\mathbb Z$, and are covered by~\cite[Corollary~8.1.6(5)]{FFIMSW26};

\item odd Coxeter groups of rank at least $3$ (i.e.\ all $m_{st}$ are odd or $\infty$) for which the graph on $S$ joining $s,t$ whenever $m_{st}<\infty$ is a tree~\cite{NaiSin1}. These groups are virtually free, hence are non-elementary Gromov-hyperbolic, 
thus their $R_\infty$ property is also subsumed by the first item above;

\item twin groups~\cite{NaNaSi1}, i.e.\ right-angled Coxeter groups whose Coxeter graph is a path with every edge labeled $\infty$. For $n\ge4$ the twin group $T_n$ (of rank $n-1$) is non-virtually-abelian, so this result is now also subsumed by the preceding right-angled Coxeter group case, whereas $T_3$ is the infinite dihedral group $D_\infty$. 
\end{itemize}

In this paper we give a short combinatorial proof of the following theorem.

\begin{thm}\label{thm:1}
Let $G$ be a nonspherical Artin or Coxeter group, and let $\varphi$ be a length-preserving automorphism of $G$. Then $R(\varphi)=\infty$.
\end{thm}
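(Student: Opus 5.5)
A length-preserving automorphism $\varphi$ permutes $S$ up to signs. In the Coxeter case all signs disappear, since $s^{-1}=s$. The plan is to produce an invariant of twisted conjugacy classes that takes infinitely many values. The natural candidates come from a $\varphi$-equivariant quotient in which twisted conjugacy becomes computable. A first reduction: since $R(\varphi)=\infty$ whenever $R(\varphi^k\circ \mathrm{inn})$ behaves suitably, we would rather work directly. Write $\varphi=\sigma$ or $\varphi=\sigma\iota$ on generators with signs; signs only matter for Artin groups.

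\textbf{Step 1 (Artin case via abelianization).} The map $\varphi$ descends to $H=G^{ab}$, and $R(\varphi)\ge R(\varphi^{ab})$. For the induced map on $\Z^{k}$ (one coordinate per conjugacy class of generators, i.e.\ per odd-component of $\Gamma$) we have $R(\varphi^{ab})=|\Z^k/\Im(1-\varphi^{ab})|$. This is infinite iff $\varphi^{ab}$ has eigenvalue $1$. Since $\varphi^{ab}$ is a signed permutation matrix, the obstruction arises exactly when every cycle of the permutation carries an odd number of sign changes, e.g.\ for $\iota$. Hence abelianization handles all Artin cases except these ``twisted'' ones, and it handles no Coxeter case, since $W^{ab}$ is finite.

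\textbf{Step 2 (main argument: reduction to a parabolic or to the Coxeter group).} For the remaining Artin cases, compose with the quotient $A\to W$. The map $\varphi$ induces the length-preserving automorphism $\bar\varphi$ of $W$ given by the permutation $\sigma$, and $R(\varphi)\ge R(\bar\varphi)$. So it suffices to prove the Coxeter statement. Here the key tool is an invariant of $\bar\varphi$-twisted conjugacy. Since $\sigma$ has finite order $n$, the map $g\mapsto g\,\bar\varphi(g)\cdots\bar\varphi^{n-1}(g)$ sends $\bar\varphi$-twisted classes to conjugacy classes: if $h=xg\bar\varphi(x)^{-1}$, then $N(h)=xN(g)x^{-1}$. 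Equivalently, pass to $W\rtimes\langle\sigma\rangle$, where twisted classes of $g$ correspond to conjugacy classes of $g\sigma$ lying in the coset $W\sigma$. It then suffices to exhibit infinitely many $g$ whose elements $N(g)=(g\sigma)^n$ lie in pairwise distinct conjugacy classes of $W$, for instance with distinct minimal lengths in their conjugacy classes, or of distinct orders.

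\textbf{Step 3 (constructing the family).} Since $W$ is infinite, there is a $\sigma$-invariant infinite parabolic subgroup. Take the union of $\sigma$-orbits in a connected nonspherical component, and inside it a $\sigma$-invariant minimal nonspherical subset $T$. Choose $c$ to be a product over a $\sigma$-orbit-compatible ordering of $T$, a twisted Coxeter element, so that $N(c)$ is a product in which every generator of $T$ appears. Such elements have infinite order, being essential elements of an infinite irreducible parabolic (Paris / Krammer). Take $g_k=c\,N(c)^{k}$-type powers, or simply $g=c^{(k)}$ with $N(g_k)=N(c)^{k}$. The cyclically reduced lengths of $N(c)^k$ then grow linearly in $k$. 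By Krammer's result that an essential element of infinite order has conjugates of length bounded below by a linear function of the exponent (or via the Davis complex: translation length of $N(c)$ positive, so minimal length in the conjugacy class of $N(c)^k$ is at least $k\cdot\tau$), the classes are distinct.

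The main obstacle is Step 3: ensuring that $N(g_k)$ is genuinely of infinite order with unbounded conjugacy-minimal length. I would first try translation length in the Davis complex or Tits cone, using that a $\sigma$-twisted Coxeter element of an irreducible infinite Coxeter group is essential and thus acts with positive translation length. The reducible case is handled by restricting to one $\sigma$-orbit of irreducible components, via the product structure.
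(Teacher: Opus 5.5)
Your framework matches the paper's. You pass to the Coxeter quotient (Step 1 is not needed). You encode twisted classes as conjugacy classes in the coset $W\sigma$ of $W\rtimes\langle\sigma\rangle$. You separate the classes of powers of one infinite-order element of $W\sigma$ by the translation length of their $n$-th powers on the Davis complex. One small correction: only exponents $\equiv 1 \pmod n$ stay in $W\sigma$, so take $g_k=N(c)^k c$, which gives $N(g_k)=N(c)^{1+kn}$.

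\textbf{The gap.} It lies in the one step with real content: producing an infinite-order element of $W\sigma$. You claim the twisted Coxeter element $c\sigma$ works because $N(c)=c\,\sigma(c)\cdots\sigma^{n-1}(c)$ is an essential element of an infinite irreducible parabolic. Neither half is justified.
\begin{enumerate}
\item $W_T$ need not be irreducible. A minimal $\sigma$-invariant nonspherical $T$ can be a $\sigma$-orbit of several components; this part is fixable by projecting.
\item More seriously, the fact that every generator of $T$ occurs in a word for $N(c)$ says nothing about essentiality, i.e.\ about lying outside every conjugate of a proper standard parabolic. For example, $sts$ involves both $s$ and $t$ but is conjugate to $t$.
\end{enumerate}
The results of Howlett and Krammer--Paris concern ordinary Coxeter elements, in which each generator occurs exactly once, and $N(c)$ is not of that form in general. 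In fact, whether every twisted Coxeter element has infinite order is exactly the question the paper leaves open in its final section. So this step cannot be settled by citation.

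\textbf{How the paper gets around it.} It never uses twisted Coxeter elements; it splits into two cases.
\begin{enumerate}
\item Some $\sigma$-orbit $I$ generates an infinite parabolic $W_I$. Then for $s\in I$ one has $(s\sigma)^n=c_I^{\,n/|I|}$, where $c_I=s\,\sigma(s)\cdots\sigma^{|I|-1}(s)$ is an ordinary Coxeter element of $W_I$, so Howlett applies.
\item Every orbit parabolic $W_I$ is finite. For your minimal $T$ this is automatic as soon as $T$ contains at least two orbits. Here one replaces single reflections by the longest elements $w_0(I)$. By H\'ee--M\"uhlherr, $W^\sigma$ is a Coxeter group on $\{w_0(I)\}$. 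By Nuida, for $W$ irreducible it is irreducible and nonspherical. Hence an ordinary Coxeter element $c'$ of $W^\sigma$ has infinite order by Howlett, and since $\sigma(c')=c'$, we get $(c'\sigma)^n=c'^n$.
\end{enumerate}
The reducible case is then handled by projecting onto one component of a $\sigma$-orbit of nonspherical components, as you suggest. This fixed-subgroup step is the missing idea; with it, the rest of your plan goes through.
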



The statement about Artin groups in Theorem~\ref{thm:1} strengthens earlier work of Juh\'asz~\cite{Juhas1} on extra-large-type Artin groups and on large-type Artin groups for which the defining graph, i.e.\ the graph joining $s,t$ whenever $m_{st}<\infty$, is connected and triangle-free.

\begin{corl}\label{cor:outer-lp}
Let $G$ be a nonspherical Artin or Coxeter group. If every class in $\Out(G)$ has a length-preserving representative, then $G$ has property $R_\infty$. In particular, this holds if $\Out(G)$ is generated by automorphisms of the Coxeter graph and, in the Artin case, the global inversion.
\end{corl}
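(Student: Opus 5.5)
The plan is to reduce the corollary to Theorem~\ref{thm:1} by showing that the Reidemeister number depends only on the outer class of an automorphism. The "in particular" part then follows once we check that automorphisms of the Coxeter graph and the global inversion generate a subgroup of $\Aut(G)$ consisting entirely of length-preserving automorphisms.

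\emph{Step 1: invariance under inner automorphisms.} Let $\varphi\in\Aut(G)$, $g\in G$, and write $\psi=\conj_g\circ\varphi$, where $\conj_g(x)=gxg^{-1}$. We claim that right multiplication $h\mapsto hg$ sends $\psi$-twisted conjugacy classes bijectively onto $\varphi$-twisted conjugacy classes. Suppose $h'=x\,h\,\psi(x)^{-1}$. Since $\psi(x)^{-1}=g\varphi(x)^{-1}g^{-1}$, we get
\[
h'g=x\,h\,g\,\varphi(x)^{-1}.
\]
So $h'g$ and $hg$ are $\varphi$-twisted conjugate. The converse is the same computation with $g^{-1}$, and the map is clearly invertible. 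Hence $R(\conj_g\circ\varphi)=R(\varphi)$, and $R$ is constant on each class of $\Out(G)$.

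\emph{Step 2: the first assertion.} Take any $\varphi\in\Aut(G)$. By hypothesis its class in $\Out(G)$ contains a length-preserving automorphism $\varphi'$. By Theorem~\ref{thm:1}, $R(\varphi')=\infty$, and by Step~1, $R(\varphi)=R(\varphi')=\infty$. Hence $G$ has property $R_\infty$.

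\emph{Step 3: the ``in particular'' part.} Let $L\le\Aut(G)$ be the subgroup generated by the automorphisms induced by automorphisms of the Coxeter graph $\Gamma$ (label-preserving permutations of $S$), together with $\iota$ in the Artin case.
\begin{itemize}
\item Each generator of $L$ maps $S$ into $S\cup S^{-1}$.
\item The set of automorphisms sending $S\cup S^{-1}$ onto itself is closed under composition and inverses, so every element of $L$ is length-preserving. (Concretely, $\iota$ commutes with graph automorphisms, so $L$ is the set of compositions $\sigma$ or $\iota\circ\sigma$ with $\sigma$ a graph automorphism.)
\item If $\Out(G)$ is generated by the images of these generators, then the image of $L$ in $\Out(G)$ is a subgroup containing a generating set, hence equals $\Out(G)$.
\end{itemize}
Thus every outer class has a length-preserving representative, and the first assertion applies.

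The only step with any content is the bijection in Step~1, and it is routine. The substance of the corollary lies entirely in Theorem~\ref{thm:1}.
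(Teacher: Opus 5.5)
Your proof is correct and matches the paper's: it combines Theorem~\ref{thm:1} with the fact that $R(\varphi)$ depends only on the class of $\varphi$ in $\Out(G)$, which the paper cites from \cite[Lemma~2.1]{FeGoDa1} and you instead verify directly via the bijection $h\mapsto hg$. Your Step~3 makes explicit a routine check that the paper leaves implicit, namely that graph automorphisms together with $\iota$ generate only length-preserving automorphisms.
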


Known descriptions of automorphism groups allow us to apply Corollary~\ref{cor:outer-lp} to several classes of Artin and Coxeter groups, listed below.

\begin{corl}\label{cor:2}
Let $A[\Gamma]$ be a large-type free-of-infinity Artin group of rank at least $2$. Then $A[\Gamma]$ has property $R_\infty$.
\end{corl}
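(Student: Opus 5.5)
The plan is to split according to the rank of $A[\Gamma]$. When the rank is at least $3$, I would apply Corollary~\ref{cor:outer-lp} together with the known description of the automorphism group. When the rank is exactly $2$, the group is spherical, so Theorem~\ref{thm:1} does not apply, and I would instead cite the earlier result for dihedral type.

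\emph{Rank $2$.} Here $\Gamma$ has two vertices $s,t$ and $m_{st}=m$ with $3\le m<\infty$, since the group is large-type and free-of-infinity. So $A[\Gamma]$ is the spherical Artin group of type $I_2(m)$ with $m\ge 3$. Property $R_\infty$ for these groups was established in~\cite{CalSor1}, as recorded in the list of known results in the introduction. This settles the rank $2$ case.

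\emph{Rank at least $3$: nonsphericity.} First I check that $A[\Gamma]$ is nonspherical. Every pair of distinct generators has $3\le m_{st}<\infty$, so any three generators $r,s,t$ span a standard parabolic subgroup of $W[\Gamma]$ whose Coxeter graph is a triangle with all labels at least $3$. Finite rank-$3$ Coxeter groups require $\frac1{m_{rs}}+\frac1{m_{st}}+\frac1{m_{rt}}>1$. This fails when all three labels are $\ge 3$, because the sum is at most $1$. So this triangle subgroup is infinite (affine or hyperbolic), hence $W[\Gamma]$ is infinite and $A[\Gamma]$ is nonspherical.

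\emph{Rank at least $3$: automorphisms.} Next I invoke Vaskou's computation of the automorphism groups of large-type free-of-infinity Artin groups of rank at least $3$. That result says $\Aut(A[\Gamma])$ is generated by the inner automorphisms, the automorphisms induced by symmetries of the labeled graph $\Gamma$, and the global inversion $\iota$. Consequently $\Out(A[\Gamma])$ is generated by the images of graph automorphisms and of $\iota$. These generators are length-preserving, and compositions of length-preserving automorphisms are again length-preserving. Hence every class in $\Out(A[\Gamma])$ has a length-preserving representative, and the second sentence of Corollary~\ref{cor:outer-lp} gives property $R_\infty$.

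The only delicate points are bookkeeping. One must check that the cited description of $\Aut$ applies exactly under the hypotheses \emph{large-type, free-of-infinity, rank $\ge3$}. One must also remember that the rank $2$ case falls outside Theorem~\ref{thm:1} because the group is spherical there. I expect no further obstacle, since the substantive work is contained in Theorem~\ref{thm:1} and in the cited structural result on automorphisms.
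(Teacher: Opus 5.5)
Your proposal is correct and follows the paper's proof. The paper also splits by rank. It cites \cite{CalSor1} for the dihedral case $I_2(m)$, $m\ge3$. For rank at least $3$ it combines Vaskou's description of $\Out(A[\Gamma])$ \cite{Vas25} with nonsphericity and Corollary~\ref{cor:outer-lp}. Your triangle-group argument ($\frac1{m_{rs}}+\frac1{m_{st}}+\frac1{m_{rt}}\le 1$) makes explicit the nonsphericity step that the paper only asserts.
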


Corollary~\ref{cor:2} strengthens the result of~\cite{SorVas1}, where the same conclusion was proved under the additional assumption that $A[\Gamma]$ is of hyperbolic type.

\begin{corl}\label{cor:coxeter-classes}
The following classes of Coxeter groups have property $R_\infty$:
\begin{enumerate}
\item every infinite Coxeter group whose infinite irreducible components are strongly rigid (with arbitrary finite components). In particular, this includes every infinite irreducible $2$-spherical Coxeter group;
\item every infinite virtually abelian Coxeter group. Equivalently, these are the Coxeter groups whose Coxeter graph is a disjoint union of spherical and affine components, with at least one affine component;
\end{enumerate}
\end{corl}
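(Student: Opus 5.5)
The plan is to deduce both parts from Corollary~\ref{cor:outer-lp}, after a preliminary reduction that removes the finite components. Write $W=W[\Gamma]=W_\infty\times W_F$, where $W_\infty$ is the product of the infinite irreducible components and $W_F$ is the (finite) product of the spherical components.

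\emph{Reduction.} An infinite irreducible Coxeter group has no nontrivial finite normal subgroup; this is a standard fact I would cite. Let $N$ be a finite normal subgroup of $W$. Its projection to each infinite irreducible factor is a finite normal subgroup of that factor, hence trivial. So $N\le W_F$, and $W_F$ is the unique maximal finite normal subgroup of $W$. In particular $W_F$ is characteristic. Every $\varphi\in\Aut(W)$ therefore induces $\bar\varphi\in\Aut(W/W_F)$ with $W/W_F\cong W_\infty$. Twisted conjugacy classes map onto twisted conjugacy classes, so $R(\varphi)\ge R(\bar\varphi)$. Since $W$ is infinite, $W_\infty\neq 1$, and it suffices to prove property $R_\infty$ for $W_\infty=W_1\times\dots\times W_k$ with each $W_i$ infinite and irreducible.

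\emph{Part (1).} Each $W_i$ has trivial center and is directly indecomposable; I would cite Paris's results on irreducible Coxeter groups for this. For a centerless group, a decomposition into directly indecomposable factors is unique as a set of subgroups. Hence any $\varphi\in\Aut(W_\infty)$ permutes the factors, $\varphi(W_i)=W_{\sigma(i)}$. The restriction $\varphi|_{W_i}$ is an isomorphism $W_i\to W_{\sigma(i)}$, so $\varphi(S_i)$ is a Coxeter generating set of $W_{\sigma(i)}$. By strong rigidity it equals $g_{\sigma(i)} S_{\sigma(i)} g_{\sigma(i)}^{-1}$ for some $g_{\sigma(i)}\in W_{\sigma(i)}$. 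Set $g=(g_1,\dots,g_k)$. Then $\conj_{g}^{-1}\circ\varphi$ maps $S=\bigsqcup S_i$ onto $S$, so it is length-preserving. Corollary~\ref{cor:outer-lp} then gives $R_\infty$. For the ``in particular'' clause I would cite the theorem (Caprace--M\"uhlherr, building on Charney--Davis) that infinite irreducible $2$-spherical Coxeter groups are strongly rigid.

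\emph{Part (2).} The stated equivalence is standard: $W$ is virtually abelian iff every component is spherical or affine, and $W$ is infinite iff at least one component is affine. After the reduction, $W_\infty$ is a product of irreducible affine groups. So part (2) follows from part (1) once irreducible affine Coxeter groups are known to be strongly rigid, and I would look for this in the rigidity literature (Howlett--M\"uhlherr, Franzsen--Howlett describe their automorphism groups). This is the step I expect to be the main obstacle, since strong rigidity might fail for some affine types; one should check small cases such as $\widetilde A_1=D_\infty$ and the $\widetilde C_n$ family separately.

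If strong rigidity is unavailable for some affine type, the fallback argument is as follows. Work directly with $W_\infty=T\rtimes W_0$, where the translation lattice $T\cong\Z^n$ is characteristic, being the unique maximal normal free abelian subgroup of finite index. Then use the Franzsen--Howlett description of $\Aut(W_i)$ to show directly that every outer class contains a diagram automorphism, possibly after composing with an inner automorphism coming from the extended affine Weyl group, which acts by conjugation inside $\Isom(\mathbb R^n)$ and permutes a fundamental alcove's walls. Corollary~\ref{cor:outer-lp} would then again give $R_\infty$.
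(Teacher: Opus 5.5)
Your proof is correct once the affine input discussed below is supplied. Its skeleton is the paper's: $W_F$ is the largest finite normal subgroup by \cite[Proposition~4.3]{Par07}, Lemma~\ref{lem:quotient} reduces to $W_\infty$, Corollary~\ref{cor:outer-lp} finishes, and (2) is deduced from (1).

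The one genuine difference is the product step in (1). The paper cites \cite[Proposition~17.3]{HMN18}: a centerless direct product of strongly rigid irreducible Coxeter groups is strongly rigid. You instead derive directly the only consequence actually used, namely that every automorphism of $W_\infty$ is inner-by-graph. You get this from direct indecomposability of infinite irreducible Coxeter groups together with uniqueness of indecomposable direct factors in a centerless group. That uniqueness claim is true and deserves one line in a write-up. If $G=A\times B=C\times D$ with $Z(G)=1$, then the $C$-component of any $a\in A$ centralizes $B$, hence lies in $C_G(B)=A\times Z(B)=A$. So $A=(A\cap C)\times(A\cap D)$, and similarly for $B$. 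Your route is more self-contained, and it shows that full strong rigidity of $W_\infty$ is more than is needed (that is, conjugacy of \emph{all} Coxeter generating sets, including ones with a priori different Coxeter graphs). The citation buys the stronger statement in one line.

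For (2), the step you flagged as the main obstacle is not one. The paper makes exactly your reduction to (1) and takes strong rigidity of affine Coxeter systems from \cite[Proposition~17.7]{HMN18}, which covers the affine types, $\widetilde C_n$ included. For $\widetilde A_1=D_\infty=\langle a,b\rangle$ it is a direct check: every Coxeter generating set is a pair $\{at^k,at^{k+1}\}$ with $t=ab$, and all of these are conjugate to $\{a,b\}$; cf.\ \cite[Lemma~7.1.1]{FFIMSW26}.

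So the fallback should be dropped. As sketched it is not an argument anyway. Extracting ``every outer class contains a diagram automorphism'' from the Franzsen--Howlett description is the entire content. Moreover, conjugation by elements of the extended affine Weyl group outside $W$ yields diagram automorphisms, not inner ones. For the $2$-spherical clause the paper cites \cite[Theorem~1]{FHM06}.
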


\begin{expl}\label{expl:finite-factor-example}
Let $W=D_\infty\times C_2=\langle a,b\mid a^2=b^2=1\rangle\times\langle z\mid z^2=1\rangle$. Then the automorphism $\tau$ defined by
$\tau(a)=az,\,\tau(b)=b,\,\tau(z)=z$ is not inner-by-graph: it sends the reflection $a$ to the non-reflection $az$. Thus, $W$ itself is not strongly rigid, but its infinite dihedral component $D_\infty$ is (see~\cite[Lemma~7.1.1]{FFIMSW26}). Hence, Corollary~\ref{cor:coxeter-classes}(1) establishes $R_\infty$ for $W$.
\end{expl}

A separate geometric argument yields the following result on automorphism groups.

\begin{thm}\label{thm:aut-ah}
Let $(W,S)$ be an infinite irreducible non-affine strongly rigid Coxeter system. Then $\Aut(W)$ is acylindrically hyperbolic. In particular, this applies to every infinite irreducible $2$-spherical non-affine Coxeter system.
\end{thm}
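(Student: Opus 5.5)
The plan is to show that $\Aut(W)$ is, up to a finite normal subgroup, a group acting properly and cocompactly on the Davis complex $\Sigma=\Sigma(W,S)$ and containing a rank-one isometry. The general dichotomy for such groups then gives acylindrical hyperbolicity.

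\emph{Step 1: structure of $\Aut(W)$.} Let $\varphi\in\Aut(W)$. Then $\varphi(S)$ is again a Coxeter generating set for $W$. By strong rigidity there is $w\in W$ with $\varphi(S)=wSw^{-1}$. Hence $\sigma:=\conj_{w^{-1}}\circ\varphi$ maps $S$ bijectively to $S$. Since $\sigma$ is an automorphism, it preserves the orders $m_{st}$ of the products $st$, so $\sigma$ is a Coxeter graph automorphism. Consequently the natural homomorphism
\[
\Phi\colon W\rtimes\Aut(\Gamma)\to\Aut(W),\qquad (w,\sigma)\mapsto \conj_w\circ\sigma,
\]
is surjective. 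Its kernel consists of the pairs $(w,\sigma)$ with $\sigma=\conj_{w^{-1}}$. For infinite irreducible $W$ the center is trivial, so $w$ is determined by $\sigma$. Therefore $\ker\Phi$ injects into the finite group $\Aut(\Gamma)$ and is a finite normal subgroup. Since acylindrical hyperbolicity passes to quotients by finite normal subgroups (the induced action on the same hyperbolic space remains acylindrical), it suffices to treat $G:=W\rtimes\Aut(\Gamma)$.

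\emph{Step 2: geometric action.} Graph automorphisms permute the spherical special subgroups. Hence $\Aut(\Gamma)$ acts on $\Sigma$ by cellular isometries fixing the base chamber, compatibly with the $W$-action. Thus $G$ acts on the CAT(0) space $\Sigma$ by isometries, and the action is proper and cocompact, since $W$ already acts this way and has finite index in $G$.

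\emph{Step 3: rank one and conclusion.} Because $(W,S)$ is infinite, irreducible and non-affine, Caprace--Fujiwara show that $W$ contains an element acting on $\Sigma$ as a rank-one isometry; this element also lies in $G$. By the result of Sisto (building on Bestvina--Bromberg--Fujiwara), a group acting properly on a proper CAT(0) space with a rank-one element is either virtually cyclic or acylindrically hyperbolic. Now $G$ is not virtually cyclic, because $W$ is not virtually abelian: it is irreducible, infinite and non-affine. Hence $G$, and therefore $\Aut(W)=G/\ker\Phi$, is acylindrically hyperbolic. For the final sentence of the theorem, infinite irreducible $2$-spherical Coxeter systems are strongly rigid, as used in Corollary~\ref{cor:coxeter-classes}(1).

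\emph{Main obstacle.} The most delicate point is the existence of the rank-one element. I would cite Caprace--Fujiwara for it rather than reprove it. The remaining points are that the graph automorphisms genuinely act on $\Sigma$ and that $\ker\Phi$ is finite. As a fallback that avoids Step 2, note that $\Inn(W)\cong W$ is a finite-index normal subgroup of $\Aut(W)$ and is acylindrically hyperbolic. By Minasyan--Osin, an overgroup containing an acylindrically hyperbolic s-normal subgroup is itself acylindrically hyperbolic, which again gives the claim.
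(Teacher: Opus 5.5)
Your Steps 1--3 are correct and are essentially the paper's proof. The ingredients match one for one:
\begin{itemize}
\item the same surjection $\Phi\colon W\rtimes\Aut(\Gamma)\to\Aut(W)$ obtained from strong rigidity;
\item the same injection of $\ker\Phi$ into the finite group of graph automorphisms, using $Z(W)=1$;
\item the action of this finite extension on the Davis complex, with a rank-one element supplied by Caprace--Fujiwara;
\item Sisto's dichotomy;
\item passage to the quotient by the finite normal subgroup $\ker\Phi$.
\end{itemize}

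You should, however, discard the fallback. There is no theorem saying that a group containing an acylindrically hyperbolic s-normal subgroup is itself acylindrically hyperbolic. Osin's result goes the other way: s-normal subgroups of acylindrically hyperbolic groups are acylindrically hyperbolic. Whether acylindrical hyperbolicity passes from a finite-index subgroup to the ambient group is an open question of Minasyan--Osin, as the paper recalls in Section~\ref{sec:questions}. Your fallback needs exactly this implication for $\Inn(W)\le\Aut(W)$, so it does not work.

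This also means Step~2 is not optional. Making the graph automorphisms act on $\Sigma$, so that the whole group $W\rtimes\Aut(\Gamma)$ acts properly with a rank-one element, is precisely how the argument avoids the open problem.

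A smaller point: your parenthetical justification of the finite-quotient step is inaccurate. The subgroup $\ker\Phi$ need not act trivially on the hyperbolic space, so $G/\ker\Phi$ does not literally act on that space. The statement itself is standard and should simply be cited; the paper uses Minasyan--Osin's Lemma~1.
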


Theorem~\ref{thm:aut-ah} has many remarkable consequences, see~\cite[Section~3]{FFW23}. In particular, it implies that $W$ is acylindrically hyperbolic itself and admits an infinite simple $2$-generated characteristic quotient (see~\cite{CouFou1}). 

\begin{acknow}
The author is grateful to Francesco Fournier-Facio, Kaitlin Ragosta and Nicolas Vaskou for their interest to this topic and useful discussions.
\end{acknow}

\section{Preliminaries}\label{sec:twisted}

We first record two elementary facts. If $\varphi$ is length-preserving, then $\varphi(S\cup S^{-1})\subseteq S\cup S^{-1}$. Since $\varphi$ is injective and $S\cup S^{-1}$ is finite, it permutes this set. In particular, $\varphi$ has finite order.

\begin{lem}[\protect{\cite[Proposition~4]{CalSor1}}]\label{lem:cosets}
Let $G$ be a group, let $\varphi\in\Aut(G)$ have finite order $m$, and set
\[
G_\varphi=G\rtimes_\varphi\langle t\mid t^m=1\rangle, \qquad tgt^{-1}=\varphi(g)\quad\text{for all }g\in G.
\]
Then $g,h\in G$ are $\varphi$-twisted conjugate if and only if $gt$ and $ht$ are conjugate in $G_\varphi$. Moreover, whenever they are conjugate, they are conjugate by an element of $G$. Consequently, $R(\varphi)$ equals the number of conjugacy classes of $G_\varphi$ contained in $Gt$.
\end{lem}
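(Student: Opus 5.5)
The proof is a direct computation in $G_\varphi$. The whole argument rests on one identity that converts conjugation of elements of the coset $Gt$ into twisted conjugation in $G$. For $x,g\in G$, the relation $tx^{-1}t^{-1}=\varphi(x)^{-1}$ gives
\[
x\,(gt)\,x^{-1}=x\,g\,(t x^{-1} t^{-1})\,t = x\,g\,\varphi(x)^{-1}\,t .
\]
Hence $h=xg\varphi(x)^{-1}$ holds if and only if $x(gt)x^{-1}=ht$. This already proves that $g,h$ are $\varphi$-twisted conjugate if and only if $gt$ and $ht$ are conjugate by an element of $G$.

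It remains to show that conjugacy of $gt$ and $ht$ by an arbitrary element of $G_\varphi$ can be realized by an element of $G$. The point is that conjugating $gt$ by $t$ has the same effect as conjugating it by an element of $G$. Indeed,
\[
t(gt)t^{-1}=\varphi(g)t \quad\text{and}\quad g^{-1}(gt)g=tg=\varphi(g)t .
\]
Every $y\in G_\varphi$ can be written as $y=xt^k$ with $x\in G$ and $0\le k<m$. Iterating the observation above, $t^k(gt)t^{-k}=x'(gt)x'^{-1}$ for some $x'\in G$. Explicitly, $x'$ is a product of the elements $\varphi^j(g)^{-1}$, applied one step at a time. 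Therefore $y(gt)y^{-1}=(xx')(gt)(xx')^{-1}$ with $xx'\in G$, which proves the ``moreover'' clause.

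For the count, note that $G$ is normal in $G_\varphi$ and the quotient $G_\varphi/G\cong\Z/m$ is abelian. So conjugation preserves each coset $Gt^k$, and in particular $Gt$ is a union of $G_\varphi$-conjugacy classes. By the two steps above, the map $g\mapsto gt$ induces a well-defined bijection from twisted conjugacy classes in $G$ to conjugacy classes of $G_\varphi$ contained in $Gt$. Thus $R(\varphi)$ equals the number of such classes. The only mildly delicate point is the reduction of arbitrary conjugators to $G$, which is handled by the identity $g^{-1}(gt)g=t(gt)t^{-1}$.
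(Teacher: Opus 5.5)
Your proof is correct and follows the same route as the paper. The computation $x(gt)x^{-1}=xg\varphi(x)^{-1}t$ matches twisted conjugacy with conjugacy by elements of $G$, and the identity $t(gt)t^{-1}=g^{-1}(gt)g$ (exactly the one the paper invokes) reduces an arbitrary conjugator in $G_\varphi$ to an element of $G$.
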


We note that the assumption that $G$ is centerless, made in~\cite{CalSor1}, is unnecessary. The stronger assertion about conjugation by $G$
follows from the identity $t(gt)t^{-1}=g^{-1}(gt)g$. 

\begin{lem}\label{lem:quotient}
Let $q\colon G\to Q$ be an epimorphism and let $\varphi\in\Aut(G)$ preserve $\Ker q$, inducing $\bar\varphi\in\Aut(Q)$. Then $R(\varphi)\ge R(\bar\varphi)$.
\end{lem}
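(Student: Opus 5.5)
The plan is to show that the quotient map $q$ carries $\varphi$-twisted conjugacy classes of $G$ onto $\bar\varphi$-twisted conjugacy classes of $Q$. This gives a surjection from the set of $\varphi$-classes onto the set of $\bar\varphi$-classes, and the inequality $R(\varphi)\ge R(\bar\varphi)$ follows at once, including when either side is infinite.

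First we confirm that $\bar\varphi$ is well defined and is an automorphism. Since $\varphi(\Ker q)=\Ker q$ (or at least $\varphi(\Ker q)\subseteq\Ker q$), we may set $\bar\varphi(q(g))=q(\varphi(g))$. This depends only on $q(g)$, because $g'=gk$ with $k\in\Ker q$ gives $q(\varphi(g'))=q(\varphi(g))q(\varphi(k))=q(\varphi(g))$. It is a homomorphism, and the statement takes it to be an automorphism of $Q$; this holds in particular when $\varphi(\Ker q)=\Ker q$, since then $\varphi^{-1}$ induces its inverse. It satisfies $\bar\varphi\circ q=q\circ\varphi$.

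Next, suppose $h=xg\varphi(x)^{-1}$ in $G$. Applying $q$ gives
\[
q(h)=q(x)\,q(g)\,\bar\varphi(q(x))^{-1},
\]
so $q(g)$ and $q(h)$ are $\bar\varphi$-twisted conjugate. Hence $[g]_\varphi\mapsto[q(g)]_{\bar\varphi}$ is a well-defined map between the two sets of twisted conjugacy classes.

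Finally, this map is surjective because $q$ is onto: every $\bar\varphi$-class contains some $q(g)$, and it is the image of $[g]_\varphi$. A surjection of sets gives $R(\varphi)\ge R(\bar\varphi)$, with the usual cardinal convention when the numbers are infinite. There is no real obstacle here. The only point needing care is the compatibility identity $\bar\varphi\circ q=q\circ\varphi$, which is exactly what the invariance of $\Ker q$ supplies.
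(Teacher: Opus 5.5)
Your proof is correct and is the same argument as the paper's. The identity $\bar\varphi\circ q=q\circ\varphi$ sends $\varphi$-twisted conjugate elements to $\bar\varphi$-twisted conjugate ones, and surjectivity of $q$ makes the induced map on twisted conjugacy classes onto; you simply write out the details the paper leaves implicit.
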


\begin{proof}
The map $q$ sends $\varphi$-twisted conjugate elements to $\bar\varphi$-twisted conjugate elements, and hence induces a surjection on twisted conjugacy classes.
\end{proof}

Let $(W,S)$ be a Coxeter system with $S$ finite, let $\delta$ be a Coxeter graph automorphism of order $m$, and put
\[
\widehat W=W\rtimes\langle\delta\rangle.
\]
The graph automorphism $\delta$ acts naturally on the Davis complex $\Sigma$, so $\widehat W$ acts on $\Sigma$ by isometries.

Let 
\[
\delta\backslash S=\{\langle\delta\rangle s\mid s\in S\}
\] 
denote the set of $\langle\delta\rangle$-orbits in $S$. 

For $I\in\delta\backslash S$, let
\[
W_I=\langle I\rangle
\]
be the standard parabolic subgroup generated by $I$; we call these the \emph{$\delta$-orbit parabolic subgroups}. If $W_I$ is finite, denote its longest element by $w_0(I)$.

We shall use two results on the fixed subgroup $W^\delta=\{w\in W\mid \delta(w)=w\}$.

The first is an extension of a theorem of Steinberg~\cite[Theorem~32]{Ste68} to infinite Coxeter groups.
\begin{thm}[\protect{H\'ee--M\"uhlherr \cite[Corollaire~3.5]{Hee91}, \cite[Theorem~1.3]{Muh93}}]\label{thm:hee-fixed}
The fixed subgroup $W^\delta$ is a Coxeter group with Coxeter generating set $S(W^\delta)=\{w_0(I)\mid I\in\delta\backslash S,\; W_I\text{ finite}\}$.
\end{thm}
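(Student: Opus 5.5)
The plan is to prove this in two stages: first that the elements $w_0(I)$ generate $W^\delta$, and then that they satisfy the Coxeter presentation, using the geometric representation and its Tits cone.

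\emph{Setup.} Let $V$ have basis $\{\alpha_s\}_{s\in S}$ with the Tits bilinear form. Since $\delta$ is a Coxeter graph automorphism, it permutes the simple roots and acts linearly on $V$, compatibly with $W$, so $\widehat W$ acts on $V^*$. This action preserves the fundamental chamber $C$ and the Tits cone $X=\bigcup_{w}wC$. Let $V^\delta$ and $(V^*)^\delta$ denote the fixed subspaces; $W^\delta$ preserves $X^\delta=X\cap (V^*)^\delta$.

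\emph{Generation.} I would argue by induction on the length $\ell(w)$ for $w\in W^\delta$.
\begin{enumerate}
\item Since $\delta$ preserves $S$ and $\ell$, the left descent set $D_L(w)=\{s\in S\mid \ell(sw)<\ell(w)\}$ is $\delta$-stable.
\item If $w\neq 1$, pick $s\in D_L(w)$ and let $I$ be its $\delta$-orbit, so $I\subseteq D_L(w)$.
\item By the standard fact that a descent set generates a finite parabolic subgroup, $W_I$ is finite. Moreover $w=w_0(I)\,w'$ with $\ell(w')=\ell(w)-\ell(w_0(I))$.
\item Since $\delta$ preserves $I$, we have $\delta(w_0(I))=w_0(I)$. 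Hence $w'\in W^\delta$ and $\ell(w')<\ell(w)$, and induction finishes.
\end{enumerate}
This also shows that only orbits $I$ with $W_I$ finite are needed.

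\emph{Coxeter presentation.} I would realize $W^\delta$ as a reflection group on $X^\delta$ with fundamental domain $C^\delta=\overline C\cap (V^*)^\delta$.
\begin{enumerate}
\item For each $\delta$-orbit $I$, the vector $\beta_I=\sum_{s\in I}\alpha_s$ lies in $V^\delta$. The codimension-one faces of $C^\delta$ are the sets $C^\delta\cap\beta_I^\perp$, because $C^\delta$ is cut out inside $(V^*)^\delta$ by $\beta_I\ge 0$.
\item When $W_I$ is finite, $w_0(I)$ sends every $\alpha_s$ with $s\in I$ to a negative root $-\alpha_{s'}$ with $s'\in I$. Hence it maps $\beta_I$ to $-\beta_I$. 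It acts trivially on the part of $(V^*)^\delta$ orthogonal to $\alpha_I$, so it restricts to a linear reflection of $(V^*)^\delta$ fixing the face $C^\delta\cap\beta_I^\perp$.
\item The key claim is that the translates $wC^\delta$, for $w\in W^\delta$, have disjoint interiors and cover $X^\delta$. Covering: a point of $X^\delta$ lies in some $w\overline C$, and I would choose $w$ minimal. Uniqueness of minimal coset representatives then forces $w$ to be $\delta$-fixed. Disjointness follows from the simple transitivity of $W$ on chambers of $X$.
\item With this tiling, the standard Poincar\'e-polyhedron (Tits--Vinberg) argument applies: a group generated by reflections in the walls of a convex fundamental cone tiling a convex region is a Coxeter group on those reflections. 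The relations come from the codimension-two faces, i.e.\ from the finite groups $W_{I\cup J}^\delta$.
\end{enumerate}
An alternative route for this stage is to verify the exchange condition directly for the length function $L(w)=$ number of $\delta$-orbits of walls separating $C$ from $wC$.

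The main obstacle is the tiling step. One must check that a generic point of $X^\delta$ is not contained only in lower-dimensional cells of the $W$-tessellation that fail to meet $(V^*)^\delta$ in full dimension; this is exactly the role of the minimal-coset-representative argument. One must also make sure the cones do not overlap. Once the tiling is established, the Coxeter presentation follows from standard results.
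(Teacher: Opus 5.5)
The paper gives no proof of this statement. It is quoted from H\'ee and M\"uhlherr and used as a black box, together with Nuida's theorem, in Lemma~\ref{lem:coset-infinite}. Your proposal is therefore an independent route: the classical geometric argument (in the spirit of Steinberg's proof for finite $W$) carried out on the Tits cone. Its main steps are correct.
\begin{itemize}
\item The descent-set induction gives generation by the $w_0(I)$ with $W_I$ finite.
\item $C^\delta$ is a simplicial cone in $(V^*)^\delta$, and $w_0(I)$ acts on $(V^*)^\delta$ as a reflection in the facet hyperplane $\beta_I^\perp$.
\item The tiling by the tiles $wC^\delta$, $w\in W^\delta$, holds. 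The $w$ with $x\in w\overline C$ form a $\delta$-stable coset $wW_J$, whose unique minimal element is therefore $\delta$-fixed. The relative interiors of tiles lie in open chambers, so they are disjoint.
\end{itemize}
The citation buys the paper brevity and M\"uhlherr's more general setting of admissible partitions. Your route buys a self-contained proof inside the geometric representation.

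Two remarks.

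\emph{The final step.} Vinberg's theorem, as usually stated, assumes a reflection in every wall of the cone. Here the facets $C^\delta\cap\beta_I^\perp$ with $W_I$ infinite carry none. They lie on $\partial X^\delta$, and by your own induction their relative interior points have trivial stabilizer $W_I^\delta$. The cleanest finish is Tits's criterion: a set $R$ of involutions generating a group is a Coxeter system if there are subsets $P_r$ such that
\begin{itemize}
\item $1\in P_r$;
\item $P_r\cap rP_r=\emptyset$;
\item $wr'=rw$ whenever $w\in P_r$ and $wr'\notin P_r$.
\end{itemize}
Take $P_I$ to be the set of $w\in W^\delta$ with $wC^\delta$ on the same side of $\beta_I^\perp$ as $C^\delta$.
\begin{itemize}
\item No tile is cut by this hyperplane, since $\beta_I^\perp\cap(V^*)^\delta\subseteq\alpha_s^\perp$ for $s\in I$, and tile interiors lie in open chambers.
\item For the third condition, relative interior points of $C^\delta\cap\beta_J^\perp$ have stabilizer $W_J^\delta=\{1,w_0(J)\}$, again by your induction. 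Hence each mirror facet lies in exactly two tiles, which forces $w_0(I)\,w\,C^\delta=w\,w_0(J)\,C^\delta$.
\end{itemize}

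\emph{The alternative route.} This is wrong as stated. Counting $\delta$-orbits of walls separating $C$ from $wC$ does not give the length with respect to $S(W^\delta)$. In type $A_2$ with the diagram flip, $W^\delta=\{1,w_0\}$ and $w_0=s_1s_2s_1$ is a single generator. Yet the three walls separating $C$ from $w_0C$ form two $\delta$-orbits. One must instead count hyperplanes $\bar\beta^\perp$ in $(V^*)^\delta$, where $\bar\beta$ is the $\delta$-average of a root; distinct orbits can give the same hyperplane.
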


\begin{thm}[\protect{Nuida~\cite[Theorem~5.2(1)]{Nui07}}]\label{thm:nuida-fixed}
Suppose that $(W,S)$ is irreducible and nonspherical. If every $\delta$-orbit parabolic subgroup $W_I$, $I\in\delta\backslash S$, is finite, then the Coxeter system $(W^\delta,S(W^\delta))$ is irreducible and nonspherical.
\end{thm}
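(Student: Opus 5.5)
The approach is to deduce both conclusions from the standard structure theory of fixed subgroups behind Theorem~\ref{thm:hee-fixed}, phrased in terms of the root system $\Phi=\Phi^+\sqcup\Phi^-$ of $(W,S)$ in the geometric representation. The facts I will use (Steinberg, H\'ee, M\"uhlherr) are the following.

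\begin{enumerate}
\item $\delta$ permutes the simple roots and preserves $\Phi^+$.
\item For $w\in W^\delta$ with a reduced $W^\delta$-expression $w=w_0(I_1)\cdots w_0(I_k)$, the lengths add:
\[
\ell_S(w)=\sum_j \ell_S(w_0(I_j)).
\]
\item The inversion set $N(w)=\{\alpha\in\Phi^+ \mid w^{-1}\alpha\in\Phi^-\}$ is the disjoint union of $\delta$-stable blocks. Each block is a translate $v\,\Phi^+_{I_j}$ of the positive roots of a finite $\delta$-orbit parabolic.
\item $\Phi^+$ itself is partitioned into such blocks, which are indexed by the reflections of $W^\delta$.
\end{enumerate}

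First I prove that $W^\delta$ is infinite. Suppose instead that $W^\delta$ is finite. Then it has a longest element $w'$ with respect to $S(W^\delta)$, and $w'$ makes every reflection of $(W^\delta,S(W^\delta))$ an inversion. By the block decomposition, $N(w')$ is the union of all blocks, which is all of $\Phi^+$. An element of $W$ whose inversion set is $\Phi^+$ forces $\Phi^+$ to be finite, so $W$ is finite. This contradicts nonsphericity. The hypothesis that every $W_I$ is finite is exactly what guarantees that every orbit contributes a generator $w_0(I)$ and hence a block, so no positive roots escape the covering.

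Second I prove irreducibility. Take two orbits $I\neq J$, and write $m=m(w_0(I),w_0(J))$ for the order of their product. I claim $m=2$ if and only if $st=ts$ for all $s\in I$ and $t\in J$. The direction ``commuting implies $m=2$'' is immediate. Conversely, suppose $m=2$. Then $W_{I\cup J}^\delta$ is the dihedral group of order $4$ generated by $w_0(I)$ and $w_0(J)$. Applying the first step to the $\delta$-stable parabolic $W_{I\cup J}$ shows that $W_{I\cup J}$ is finite. Its longest element is then $w_0(I)w_0(J)$, with length $\ell(w_0(I))+\ell(w_0(J))=|\Phi_I^+|+|\Phi_J^+|$. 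On the other hand, this length equals $|\Phi^+_{I\cup J}|$. If some $s\in I$ and $t\in J$ fail to commute, the root $\alpha_s+c\,\alpha_t$ with $c>0$ lies in $\Phi^+_{I\cup J}$ but in neither $\Phi_I^+$ nor $\Phi_J^+$, so $|\Phi^+_{I\cup J}|>|\Phi_I^+|+|\Phi_J^+|$, a contradiction. Granting the claim, the Coxeter graph of $(W^\delta,S(W^\delta))$ is the quotient of the connected graph $\Gamma$ by $\langle\delta\rangle$, with orbits joined whenever some members are joined. A quotient of a connected graph is connected, so $(W^\delta,S(W^\delta))$ is irreducible.

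The main obstacle is the first step. It relies on making the correspondence ``reflections of $W^\delta$ $\leftrightarrow$ blocks partitioning $\Phi^+$'' precise for infinite $W$, and on checking that every block is indexed by a finite $\delta$-orbit parabolic, which is where the finiteness hypothesis enters. I would establish this by induction on $\ell_S$, using the folded root system: the $\delta$-fixed vectors $\sum_{s\in I}\alpha_s$, normalized, span a root system on which $W^\delta$ acts as a reflection group. With that in hand, the remaining steps are short length and root counts.
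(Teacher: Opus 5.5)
\textbf{There is a genuine gap: your fact 4 is false for infinite $W$.} It fails even when every $W_I$ is finite. So the key line of Step 1 (``$N(w')$ is the union of all blocks, which is all of $\Phi^+$'') is unjustified, and the induction you plan for it cannot succeed. Step 2 inherits the problem, because it invokes Step 1 for $W_{I\cup J}$.

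Here is a counterexample. Take $S=\{s_0,s_1,s_2\}$ with $m_{s_0s_1}=m_{s_0s_2}=\infty$ and $m_{s_1s_2}=2$, and let $\delta$ swap $s_1,s_2$. This system is irreducible and infinite, both orbit parabolics $W_{\{s_0\}}$ and $W_{\{s_1,s_2\}}$ are finite, and $W^\delta=\langle s_0,s_1s_2\rangle$ is infinite dihedral. Consider the positive root $\alpha=s_1(\alpha_{s_0})=\alpha_{s_0}+2\alpha_{s_1}$. Then $\delta\alpha=\alpha_{s_0}+2\alpha_{s_2}$ and $B(\alpha,\delta\alpha)=1-2-2+0=-3$, so $r_\alpha r_{\delta\alpha}$ has infinite order.
\begin{itemize}
\item Any $\delta$-stable block containing $\alpha$ must also contain $\delta\alpha$. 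That is impossible inside the root system of a finite parabolic subgroup $vW_Iv^{-1}$.
\item Equivalently, no $v\in W^\delta$ has $\alpha\in N(v)$. Indeed, $N(v)$ is $\delta$-stable, and no element of $W$ can invert both $\alpha$ and $\delta\alpha$: it would then invert all the infinitely many positive roots $a\alpha+b\,\delta\alpha$ ($a,b\ge0$) of the infinite dihedral reflection subgroup $\langle r_\alpha,r_{\delta\alpha}\rangle$.
\end{itemize}
Geometrically, every wall in a block passes through a $\delta$-fixed point, whereas in general some walls miss the $\delta$-fixed set entirely.

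\textbf{How to repair it.} The conclusion of Step 1 is nevertheless true, and your fact 2 alone proves it.
\begin{itemize}
\item Suppose $W^\delta$ is finite with longest element $w'$. For each orbit $I$, $w'$ has a reduced $S(W^\delta)$-expression ending in $w_0(I)$. Fact 2 then gives $\ell_S(w')=\ell_S(w'w_0(I))+\ell_S(w_0(I))$, hence $\ell_S(w's)<\ell_S(w')$ for all $s\in I$.
\item Every orbit contributes a generator; this is exactly where finiteness of all $W_I$ is used. So $w'(\alpha_s)<0$ for every $s\in S$.
\item Thus $w'$ maps $\Phi^+$ into $\Phi^-$, so $|\Phi^+|\le\ell_S(w')<\infty$ and $W$ is finite.
\end{itemize}
This argument does not use irreducibility, so it applies to $W_{I\cup J}$, and your Step 2 then goes through. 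The element $w_0(I\cup J)$ is $\delta$-fixed, so it lies in $\{1,w_0(I),w_0(J),w_0(I)w_0(J)\}$. Being the longest element, it must be $w_0(I)w_0(J)$, and your root count then gives the contradiction.

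The paper itself gives no proof of this statement; it only cites Nuida.
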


\section{Proofs}
Let, as above, $(W,S)$ be a Coxeter system with $S$ finite, let $\delta$ be a Coxeter graph automorphism of order~$m$, and let $W\delta$ denote a coset of $\widehat W=W\rtimes\langle \delta\rangle$.

\begin{lem}\label{lem:one-infinite-order}
If $W\delta$ contains an element of infinite order, then $R(\delta)=\infty$.
\end{lem}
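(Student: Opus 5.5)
By Lemma~\ref{lem:cosets}, applied with $G=W$ and $\varphi=\delta$, $R(\delta)$ is the number of $\widehat W$-conjugacy classes contained in the coset $W\delta$. So it suffices to produce infinitely many pairwise non-conjugate elements of $W\delta$.

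Let $g=w\delta\in W\delta$ have infinite order. The natural candidates are the odd powers $g^{1+km}$, $k\ge0$. Since $\delta^m=1$ and $W$ is normal in $\widehat W$, we have $g^m\in W$, and hence $g^{1+km}=g\cdot(g^m)^k\in W\delta$. These elements are pairwise distinct because $g$ has infinite order. What remains is to show that infinitely many of them lie in distinct conjugacy classes.

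For this I would use a conjugacy invariant that grows with $k$, taken from the isometric action of $\widehat W$ on the Davis complex $\Sigma$, which is a proper CAT(0) space. The action is proper and cocompact, since $W$ already acts so and has finite index in $\widehat W$. Hence every element of $\widehat W$ is semisimple, and the translation length $|h|=\inf_{x\in\Sigma} d(x,hx)$ is a conjugacy invariant. It satisfies $|h^n|=n|h|$ for semisimple $h$: an axis or fixed point of $h$ is one for $h^n$.

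The key point, and the main obstacle, is that $|g|>0$. An infinite-order element could a priori have translation length $0$, but then it would be elliptic, and in a proper action point stabilizers are finite. So an elliptic element has finite order, and therefore $g$ is hyperbolic with $|g|>0$. This needs the semisimplicity of $\widehat W$ acting on $\Sigma$, which is standard for cellular isometries of a CAT(0) polyhedral complex with finitely many shapes (Bridson). Given this, $|g^{1+km}|=(1+km)|g|$ are pairwise distinct for distinct $k$. So the $g^{1+km}$ are pairwise non-conjugate in $\widehat W$, and $R(\delta)=\infty$.

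If one prefers to avoid semisimplicity, an alternative invariant is the order of $g^{1+km}$ modulo a torsion-free finite-index normal subgroup, but this does not distinguish infinitely many classes. The translation-length argument is the one I would pursue.
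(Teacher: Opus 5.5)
Your proof is correct and follows essentially the same route as the paper: both take the powers $g^{1+km}\in W\delta$ and separate their $\widehat W$-conjugacy classes by translation length on the Davis complex $\Sigma$. The only difference is minor. The paper passes to $h=g^m\in W$, so it needs semisimplicity only for elements of $W$, and it compares the $m$-th powers $h^{1+km}$; you apply semisimplicity directly to $g$, which is justified by your correct observation that the finite extension $\widehat W$ still acts properly and cocompactly on $\Sigma$.
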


\begin{proof}
Let $g\in W\delta$ have infinite order and set $h=g^m\in W$. Then $h$ also has infinite order. For $w\in W$, let $|w|_\Sigma$ denote its 
translation length on the Davis complex $\Sigma$, i.e.\ $|w|_\Sigma=\inf_{x\in\Sigma}d_\Sigma(x,wx)$.

The group $W$ acts properly and cocompactly on $\Sigma$, which is CAT$(0)$ (see~\cite[Chapter~12]{Davis1}), so every element of $W$ acts semisimply by~\cite[Proposition~II.6.10(2)]{BH99}. Since point stabilizers are finite, $h$ cannot be elliptic and is therefore hyperbolic. By~\cite[Proposition~II.6.8(1)]{BH99}, $h$ admits an axis, and hence $|h|_\Sigma>0$ and $|h^n|_\Sigma=n|h|_\Sigma$ for all $n\ge1$.

For $k\ge0$ set $n_k=1+km$. Then $g^{n_k}\in W\delta$. If $g^{n_i}$ and $g^{n_j}$ were conjugate in $\widehat W$, their $m$-th powers $h^{n_i}$ and $h^{n_j}$ would also be conjugate. Since $\widehat W$ acts on $\Sigma$ by isometries and translation length is conjugacy invariant,
\[
n_i|h|_\Sigma=|h^{n_i}|_\Sigma=|h^{n_j}|_\Sigma=n_j|h|_\Sigma\,,
\]
so $i=j$. Thus, $W\delta$ contains infinitely many conjugacy classes, and Lemma~\ref{lem:cosets} gives $R(\delta)=\infty$.
\end{proof}

\begin{lem}\label{lem:coset-infinite}
Let $(W,S)$ be a nonspherical Coxeter system with $S$ finite, and let $\delta$ be a Coxeter graph automorphism. Then the coset $W\delta$ contains an element of infinite order.
\end{lem}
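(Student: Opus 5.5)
Suppose, for contradiction, that every element of $W\delta$ has finite order; the plan is to derive a contradiction using the fixed subgroup $W^\delta$ and Theorems~\ref{thm:hee-fixed} and~\ref{thm:nuida-fixed}. We first reduce to the irreducible case. Write $W=W_1\times\dots\times W_r$ as the product of its irreducible components. Then $\delta$ permutes the components, and at least one component is infinite. Group the components into $\delta$-orbits; the product of the components in a $\delta$-orbit containing an infinite component is a $\delta$-invariant infinite factor $W'$, and $W=W'\times W''$ with both factors $\delta$-stable. If $w'\delta$ has infinite order in $W'\rtimes\langle\delta\rangle$, then $w'\delta$ already has infinite order in $\widehat W$, because $(w'\delta)^m=((w'\delta)|_{W'})^m$. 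So we may assume $\delta$ permutes cyclically $k$ copies of one infinite irreducible Coxeter group $W_1$ (with $\delta^k$ restricting to a graph automorphism $\delta_1$ of $W_1$). For such an element $(w_1,\dots,w_k)\delta$, the $k$-th power is a tuple whose first coordinate is $u\delta_1$, where $u=w_1\delta(w_2)\cdots$ can be an arbitrary element of $W_1$. Hence it suffices to treat the case $k=1$: $W$ irreducible and nonspherical.

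Now let $W$ be irreducible and nonspherical, and split into two cases according to the $\delta$-orbits $I\in\delta\backslash S$.

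\emph{Case 1: some $W_I$ is infinite.} Then $\delta$ restricts to a graph automorphism of $W_I$ permuting $I$ transitively. I would try to exhibit an explicit infinite-order element, e.g.\ $s\delta$ for $s\in I$. Its $m$-th power is $s\,\delta(s)\,\delta^2(s)\cdots\delta^{m-1}(s)$, a product of all elements of $I$ in some order, possibly with repetitions. When the orbit has size $m$, this is a Coxeter element of $W_I$. Coxeter elements of infinite irreducible Coxeter groups have infinite order (Howlett). If $W_I$ is reducible, each irreducible component of $W_I$ is permuted transitively by $\delta$, so the components are isomorphic; one of them is infinite, hence all are, and the same argument applies componentwise. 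If the orbit size $p$ is smaller than $m$, replace $m$ by $p$ and use $(s\delta)^p=c\,\delta^p$, where $c$ is a Coxeter element of $W_I$ and $\delta^p$ fixes $I$ pointwise. Then $(s\delta)^{pN}=c^N\delta^{pN}$, which has infinite order.

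\emph{Case 2: all $W_I$ are finite.} By Theorem~\ref{thm:nuida-fixed}, $W^\delta$ is an infinite Coxeter group, so it contains an element $w$ of infinite order. Since $w$ commutes with $\delta$, we have $(w\delta)^m=w^m$, which has infinite order, so $w\delta$ has infinite order.

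The main obstacle is Case~1, specifically justifying that the relevant product of orbit elements has infinite order. This requires knowing that Coxeter elements of infinite irreducible Coxeter groups have infinite order, and handling repetitions and reducibility of $W_I$ carefully. As a fallback, one can choose $s$ and $s'$ in $I$ with $m_{ss'}=\infty$, or use the fact that some product of generators lies in no finite parabolic subgroup and so acts hyperbolically on $\Sigma$.
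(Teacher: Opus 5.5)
Your proposal is correct and follows essentially the same route as the paper: the same reduction to one infinite irreducible component via its $\delta$-orbit, the element $s\delta$ whose power is a power of the Coxeter element of $W_I$ (infinite order by Howlett) when some $W_I$ is infinite, and the fixed subgroup $W^\delta$ via H\'ee--M\"uhlherr and Nuida when all $W_I$ are finite. The only differences are minor. In the second case you take an arbitrary infinite-order element of the infinite finitely generated Coxeter group $W^\delta$ (which exists, e.g.\ by Howlett or virtual torsion-freeness) rather than a Coxeter element specifically, and the opening ``suppose for contradiction'' framing is never actually used.
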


\begin{proof}
Assume first that $(W,S)$ is irreducible, and let $m$ be the order of $\delta$. We distinguish two cases according to the finite collection of $\delta$-orbit parabolic subgroups $W_I$, $I\in\delta\backslash S$.

Suppose first that $W_I$ is infinite for some $I\in\delta\backslash S$. Choose $s\in I$ and put $d=|I|$. Since $I$ is the $\langle\delta\rangle$-orbit of $s$,
\[
I=\{s,\delta(s),\ldots,\delta^{d-1}(s)\},
\qquad d\mid m.
\]
Thus
\[
c_I=s\,\delta(s)\cdots\delta^{d-1}(s)
\]
is a Coxeter element of the Coxeter system $(W_I,I)$, since every generator in $I$ occurs exactly once. In the semidirect product $W\rtimes\langle\delta\rangle$ we have
\[
(s\delta)^m =s\,\delta(s)\cdots\delta^{m-1}(s) =\bigl(s\,\delta(s)\cdots\delta^{d-1}(s)\bigr)^{m/d} =c_I^{m/d}.
\]
Since $W_I$ is infinite, its Coxeter element $c_I$ has infinite order by Howlett~\cite[Theorem~4.1]{How82}. Hence $(s\delta)^m$, and therefore $s\delta$, has infinite order.

Suppose now that every $\delta$-orbit parabolic subgroup $W_I$ is finite. By Theorems~\ref{thm:hee-fixed} and~\ref{thm:nuida-fixed},
\[
\bigl(W^\delta,\{w_0(I)\mid I\in\delta\backslash S\}\bigr)
\]
is an irreducible nonspherical Coxeter system. Let $c$ be a Coxeter element of this system. By Howlett~\cite[Theorem~4.1]{How82}, $c$ has infinite order. Since $c\in W^\delta$, we have $\delta(c)=c$, and hence
\[
(c\delta)^m=c\,\delta(c)\cdots\delta^{m-1}(c)=c^m.
\]
Therefore $c\delta$ has infinite order.

Finally, suppose that $(W,S)$ is reducible. Write $S$ as the disjoint union of the irreducible components of the Coxeter system. Since $W$ is nonspherical, there is an irreducible component $S_0\subseteq S$ such that $W_{S_0}$ is infinite. Let $r$ be the size of the $\langle\delta\rangle$-orbit of $S_0$. Thus
\[
\mathcal O=\{S_0,\delta(S_0),\ldots,\delta^{r-1}(S_0)\}
\]
are pairwise distinct irreducible components and $\delta^r(S_0)=S_0$. Denote $\delta_0=\delta^r|_{W_{S_0}}$. 

The Coxeter system $(W_{S_0},S_0)$ is irreducible and nonspherical, so the irreducible case gives an element $u\in W_{S_0}$ such that $u\delta_0$ has infinite order in $W_{S_0}\rtimes\langle\delta_0\rangle$. Let 
\[
W_{\mathcal O} =W_{S_0}\times W_{\delta(S_0)}\times\cdots \times W_{\delta^{r-1}(S_0)}.
\]
Since the components in $\mathcal O$ are distinct, their corresponding parabolic subgroups form a direct product. Let $p_0\colon W_{\mathcal O}\to W_{S_0}$ be the projection onto the first factor. Since $\delta$ permutes the $r$ components in $\mathcal O$ cyclically, we have $r\mid m$.

Consider $g=u\delta$. Taking its $m$-th power gives
\[
g^m=u\,\delta(u)\cdots\delta^{m-1}(u)\in W_{\mathcal O}.
\]
Each factor $\delta^j(u)$ belongs to $W_{\delta^j(S_0)}$. Thus, projecting onto $W_{S_0}$ eliminates all factors except those for which $r$ divides $j$. Consequently,
\[
p_0(g^m)=u\,\delta^r(u)\,\delta^{2r}(u)\cdots\delta^{m-r}(u) =u\,\delta_0(u)\,\delta_0^2(u)\cdots\delta_0^{m/r-1}(u) =(u\delta_0)^{m/r},
\]
where the last equality holds because $\delta_0^{m/r}=1$. Since $u\delta_0$ has infinite order by construction, so does its power $(u\delta_0)^{m/r}$. Therefore $g^m$, and hence $g=u\delta$, has infinite order.
\end{proof}

\begin{proof}[Proof of Theorem~\ref{thm:1}]
First suppose that $G=W[\Gamma]$ is a Coxeter group. Since $s^{-1}=s$ for $s\in S$, the automorphism $\varphi$ permutes $S$, and hence is a Coxeter graph automorphism. Lemmas~\ref{lem:coset-infinite} and~\ref{lem:one-infinite-order} give $R(\varphi)=\infty$.

Now suppose that $G=A[\Gamma]$ is an Artin group, and let $q:A[\Gamma]\to W[\Gamma]$ be the natural quotient. Since $\varphi$ permutes $S\cup S^{-1}$, it preserves the normal closure of $\{s^2:s\in S\}$ and therefore induces an automorphism $\bar\varphi$ of $W[\Gamma]$. The automorphism $\bar\varphi$ permutes $S$, hence is a Coxeter graph automorphism. Since $W[\Gamma]$ is infinite, the Coxeter case gives $R(\bar\varphi)=\infty$, and Lemma~\ref{lem:quotient} gives $R(\varphi)=\infty$.
\end{proof}

\begin{proof}[Proof of Corollary~\ref{cor:outer-lp}]
By~\cite[Lemma~2.1]{FeGoDa1}, the Reidemeister number depends only on the class of an automorphism in $\Out(G)$. By assumption, every such class has a length-preserving representative, whose Reidemeister number is infinite by Theorem~\ref{thm:1}. Hence $G$ has property $R_\infty$.
\end{proof}

\begin{proof}[Proof of Corollary~\ref{cor:2}]
If $A[\Gamma]$ has rank $2$, it is a dihedral Artin group, and the result is~\cite[Theorem~1]{CalSor1}. Assume that the rank is at least $3$. By Vaskou~\cite[Theorem~A]{Vas25}, its outer automorphism group is generated by graph automorphisms and the global inversion. Since a large-type free-of-infinity Coxeter group of rank at least $3$ is nonspherical, Corollary~\ref{cor:outer-lp} applies.
\end{proof}

\begin{proof}[Proof of Corollary~\ref{cor:coxeter-classes}]
For (1), write
\[
W=W_\infty\times W_f,
\]
where $W_\infty$ is the product of the infinite irreducible components and $W_f$ is the product of the finite ones. By~\cite[Proposition~4.3]{Par07}, an infinite irreducible Coxeter group has no nontrivial finite normal subgroup. Hence $W_f$ is the largest finite normal subgroup of $W$, and therefore is characteristic. Every automorphism of $W$ consequently induces an automorphism of $W/W_f\cong W_\infty$.

The group $W_\infty$ is centerless, and by~\cite[Proposition~17.3]{HMN18} a direct product of strongly rigid irreducible Coxeter groups with trivial center is strongly rigid. Thus every outer automorphism class of $W_\infty$ contains a graph automorphism, and Corollary~\ref{cor:outer-lp} gives property $R_\infty$ for $W_\infty$. Lemma~\ref{lem:quotient} now gives the result for $W$. Infinite irreducible $2$-spherical Coxeter systems are strongly rigid by~\cite[Theorem~1]{FHM06}.

For (2), an irreducible Coxeter group is virtually abelian exactly when it is finite or affine~\cite[Theorem~12.3.5]{Davis1}. Since the connected components of the Coxeter graph give the irreducible direct product decomposition, $W$ is virtually abelian exactly when every connected component is spherical or affine; it is infinite exactly when at least one component is affine. Affine Coxeter systems are strongly rigid by~\cite[Proposition~17.7]{HMN18}, so (2) follows from (1).
\end{proof}

\section{Automorphism groups and acylindrical hyperbolicity}\label{sec:aut-ah}

Fournier-Facio asked us for which Coxeter groups $W$ the automorphism group $\Aut(W)$ is acylindrically hyperbolic. Theorem~\ref{thm:aut-ah} gives a partial answer to this question.

\begin{proof}[Proof of Theorem~\ref{thm:aut-ah}] We generalize the proof of~\cite[Lemma\,7.8]{AMS16} to the finite extension of $W$ by graph automorphisms. Let $D$ be the finite group of Coxeter graph automorphisms. Since $(W,S)$ is strongly rigid, every automorphism of $W$ is the composition of an inner automorphism and an element of $D$. Hence the natural homomorphism
\[
\Phi\colon W\rtimes D\longrightarrow\Aut(W),
\qquad
(w,\delta)\longmapsto\conj_w\circ\delta,
\]
is surjective.

The group $W\rtimes D$ acts properly by isometries on the Davis complex $\Sigma$. By Caprace--Fujiwara~\cite[Corollary~4.3 and Proposition~4.5]{CF10}, a Coxeter element of the irreducible non-affine group $W$ is rank one on $\Sigma$. Since $W\rtimes D$ is not virtually cyclic, it is
acylindrically hyperbolic by~\cite[Theorems~1.3 and~1.5]{Sis18}.

The kernel of this homomorphism is finite. Indeed, suppose that $(w,\delta)\in W\rtimes D$ lies in the kernel. Then $\conj_w\circ\delta=\id$, and hence $\delta=\conj_{w^{-1}}$. Thus, once $\delta$ is fixed, the element $w$ is uniquely determined: if both $(w,\delta)$ and $(w',\delta)$ lie in the kernel, then $\conj_{w^{-1}}=\delta=\conj_{w'^{-1}}$, so $w'w^{-1}\in Z(W)$. Since $W$ is infinite and irreducible, $Z(W)=1$ by~\cite[Theorem~D.2.10]{Davis1}, and therefore $w=w'$. Hence the projection
\[
\Ker\Phi\longrightarrow D,\qquad (w,\delta)\longmapsto\delta,
\]
is injective. Since $D$ is finite, $\Ker\Phi$ is finite. Acylindrical hyperbolicity passes to quotients by finite normal subgroups~\cite[Lemma~1]{MO19}, so $\Aut(W)$ is acylindrically hyperbolic.

The final assertion follows from~\cite[Theorem~1]{FHM06}: infinite irreducible $2$-spherical Coxeter systems are strongly rigid.
\end{proof}

\begin{rem}
By~\cite[Theorem~8.1.5]{FFIMSW26}, if $\Inn(G)$ is infinite and $\Aut(G)$ is acylindrically hyperbolic, then $G$ has property $R_\infty$. Hence Theorem~\ref{thm:aut-ah} gives an alternative proof of property $R_\infty$ for infinite irreducible $2$-spherical non-affine Coxeter groups. It does not, however, cover several classes for which $R_\infty$ was established by our combinatorial argument:
\begin{enumerate}
\item affine Coxeter groups, and more generally infinite virtually abelian Coxeter groups;
\item reducible Coxeter groups with at least two infinite irreducible components;
\item the Artin group applications, including large-type free-of-infinity Artin groups.
\end{enumerate}
\end{rem}

\section{Open questions}\label{sec:questions}
\subsection{Pure subgroups of Artin groups}
Franco--Paris~\cite{FP07} proved that the pure Artin subgroup $P[\Gamma]$ is characteristic for every Artin group of spherical type. This does not hold in general: for a universal Artin group $A[\Gamma]=F(S)$ (in particular, for $A[\widetilde A_1]$), the Nielsen automorphism $s\mapsto st$ does not preserve $P[\Gamma]$, since $s^2\in P[\Gamma]$ but $(st)^2\notin P[\Gamma]$. For affine types $\widetilde A_n$ and $\widetilde C_n$ the automorphism groups were determined in~\cite{ChaCri1}. Using this description one concludes that $P[\widetilde A_n]$ is characteristic in $A[\widetilde A_n]$ for $n\ge2$, and a short computation shows that $P[\widetilde C_n]$ is \emph{not characteristic} in $A[\widetilde C_n]$. 

\begin{quest}\label{q:pure-characteristic}
For which nonspherical Artin groups is the pure Artin subgroup $P[\Gamma]$ characteristic? In particular, is it characteristic for the remaining affine types $\widetilde B_n$, $\widetilde D_n$, $\widetilde E_6$, $\widetilde E_7$, $\widetilde E_8$, $\widetilde F_4$ and $\widetilde G_2$? A positive answer would give property $R_\infty$ as a consequence of Theorem~\ref{thm:1} and Lemma~\ref{lem:quotient}, since irreducible affine Coxeter groups are strongly rigid.
\end{quest}
We note that property $R_\infty$ was established for $A[\widetilde A_n]$ and $A[\widetilde C_n]$ in~\cite{CalSor1}.

\subsection{Acylindrical hyperbolicity of \texorpdfstring{$\Aut(A[\Gamma])$}{Aut(A[Gamma])}}
Large-type free-of-infinity Artin groups of rank at least $3$ are acylindrically hyperbolic~\cite{Vas22}, and their outer automorphism groups are finite~\cite{Vas25}. However, acylindrical hyperbolicity is not known in general to pass from a finite index subgroup to the ambient group, this is an open problem~\cite[Question~2]{MO19}. Thus the acylindrical hyperbolicity of the Artin group itself does not automatically imply the same for its automorphism group.

\begin{quest}\label{q:aut-artin-ah}
Is $\Aut(A[\Gamma])$ acylindrically hyperbolic for every large-type free-of-infinity Artin group of rank at least $3$? More generally, when can a known acylindrical action of an Artin group be extended to the finite group of length-preserving outer automorphisms?
\end{quest}

\subsection{Twisted Coxeter elements}
Lemma~\ref{lem:coset-infinite} constructs elements of infinite order in $W\delta$ by considering several cases. A natural uniform choice would be a \emph{twisted Coxeter element}, that is, an element $c\delta\in W\rtimes\langle\delta\rangle$, where $c$ is a product, in any order, of one simple reflection from each $\delta$-orbit in $S$. Speyer~\cite{Spe09} proved that ordinary Coxeter elements of infinite irreducible Coxeter groups are \emph{straight}, meaning that their powers are reduced. He--Nie~\cite[Proposition~3.1]{HN14} extended this result to twisted Coxeter elements of affine Coxeter groups.

\begin{quest}\label{q:twisted-speyer}
Let $(W,S)$ be an infinite irreducible Coxeter system and let $\delta$ be a Coxeter graph automorphism.
Does every twisted Coxeter element $c\delta$ have infinite order? More strongly, is it straight, i.e.\ does
$\ell\bigl(c\delta(c)\dots\delta^{n-1}(c)\bigr)=n\ell(c)$ for every $n\ge1$?
\end{quest}

\end{document}